\documentclass[10pt]{article}

\usepackage{amsmath,amssymb,amsthm,bm}
\usepackage{geometry}
\usepackage{booktabs}
\usepackage{enumitem}
\usepackage{authblk}
\usepackage[colorlinks=true,linkcolor=blue,citecolor=blue,urlcolor=blue]{hyperref}

\newtheorem{theorem}{Theorem}[section]
\newtheorem{lemma}[theorem]{Lemma}
\newtheorem{proposition}[theorem]{Proposition}

\theoremstyle{definition}
\newtheorem{definition}[theorem]{Definition}
\theoremstyle{plain}
\newtheorem{remark}[theorem]{Remark}
\newtheorem{conjecture}[theorem]{Conjecture}

\newcommand{\C}{\mathbb{C}}
\newcommand{\M}{M}
\newcommand{\norm}[1]{\left\lVert #1\right\rVert}
\newcommand{\ip}[2]{\left(#1,#2\right)}

\newcommand{\tr}{\operatorname{tr}}

\title{Proof of Nobori's generalized B\"ottcher--Wenzel inequality conjecture}
\author[1]{Shuo Shi}
\author[1]{Juan Zhang\thanks{Corresponding author. 
E-mail: shishuomath@foxmail.com(Shuo Shi); zhangjuan@xtu.edu.cn (Juan Zhang). 
This work was supported by the National Natural Science Foundation of China (12671450) and  the National Key Research and Development Program of China (2023YFB3001604)}}
\affil[1]{School of Mathematics and Computational Science, Hunan Key Laboratory for Computation and Simulation in Science and Engineering, Key Laboratory of Intelligent Computing and Information Processing of Ministry of Education,
Xiangtan University, Xiangtan, Hunan,  411105, China}
\date{}

\begin{document}
\maketitle

\begin{abstract}
Let $m,n\ge 2$.
For $m\times n$ complex matrices $A$, $C$ and an $n\times m$ matrix $B$,
Nobori (Linear Algebra Appl. 725 (2025) 135--144) proposed the following conjecture on the generalized B\"ottcher--Wenzel inequality:
$$
\norm{ABC-CBA}_F^2
\le 2\norm{B}_2^2\norm{A}_{(2),2}^2\norm{C}_F^2,
$$
where $\norm{\cdot}_F$ is the Frobenius norm, $\norm{\cdot}_2$ is the spectral norm, and $\norm{\cdot}_{(2),2}$ is the $(2,2)$-norm defined by $\norm{X}_{(2),2}=\sqrt{\sigma_1(X)^2+\sigma_2(X)^2}$,
in which $\sigma_1(X)$ and $\sigma_2(X)$ are the largest and second largest singular values of $X$, respectively.
This paper proves the conjecture and, as a consequence, establishes the Kronecker product inequality that Nobori derived from it.
In addition, we show that for each fixed $m,~n\ge 2$, the constant $2$ is sharp.
We also give necessary and sufficient conditions for equality, namely, the equality condition in the B\"ottcher--Wenzel inequality and some matrix identities derived from a fixed singular value decomposition.

\noindent\textbf{Keywords:} Generalized commutator; B\"ottcher--Wenzel inequality.

\noindent \textbf{2020 MSC:} 15A45, 15A60, 15A18.

\end{abstract}

\section{Introduction}
Let $M_{m,n}(\C)$ denote the vector space of all $m\times n$ complex matrices.
In particular, for $m=n$, we write it as $M_n(\C)$.
Let $I_n$ denote the identity matrix of order $n$, and let $\mathbf{0}$ denote the zero matrix of a proper size.
For $X\in M_{m,n}(\C)$, the conjugate transpose of $X$ is denoted by $X^*$.
Let $X\in M_{m,n}(\C)$. If $m\geq n$, then the nonnegative square roots of the eigenvalues of the positive semidefinite matrix $X^*X$ are called the singular values of $X$.
If $m<n$, then the nonnegative square roots of the eigenvalues of the positive semidefinite matrix $XX^*$ are called the singular values of $X$.
Let $\sigma_1(X)\geq\cdots\geq\sigma_r(X)$ denote the singular values of $X$ with $r=\min\{m, n\}$.

The vector space $M_{m,n}(\C)$ is equipped with the usual inner product $(X,Y)=\tr(XY^*)$ (where $\tr(Z)$ denotes the trace of $Z\in M_m(\C)$),
which satisfies
$
(X,Y)=\overline{(Y,X)}.
$
For $X\in M_{m,n}(\C)$, the Frobenius norm is given by $\norm{X}_F=\sqrt{(X,X)}$;
the spectral norm is defined by $\norm{X}_2=\sigma_1(X)$;
and the $(2,2)$-norm is defined by
$$
\norm{X}_{(2),2}=\sqrt{\sigma_1(X)^2+\sigma_2(X)^2}.
$$
When a matrix has at most one singular value, we set $\sigma_2(X)=0$.

For $A,B\in M_{n}(\C)$, the inequality
\begin{equation}
\norm{AB-BA}_F^2
\le 2\norm{A}_{F}^2\norm{B}_F^2
\label{eq:BW00}
\end{equation}
is given in \cite{Audenaert2010,Bottcher&Wenzel2008,Ge2020,Lu2011,Vong&Jin2008}.
Indeed, the upper bound in \eqref{eq:BW00} can be replaced by the following sharper bound \cite{Audenaert2010,Ge2020}:
\begin{equation}
\norm{AB-BA}_F^2
\le 2\norm{A}_{(2),2}^2\norm{B}_F^2.
\label{eq:BW}
\end{equation}
In particular, Cheng, Fong, and Lei~\cite{ChengFongLei2013} gave necessary and sufficient conditions for equality in \eqref{eq:BW}.
In recent years, attempts have been made to generalize the B\"{o}ttcher-Wenzel inequality \eqref{eq:BW00} by extending the ideas of commutators and norms \cite{Kimura&Ohno&Singal2023,Mayumi&Kimura&Ohno2024}. 
In a different direction, estimates have also been studied for the Frobenius norm of a matrix formed from the product and difference of three real square matrices \cite{Laszlo2022}.
Nobori \cite{Nobori2025} also proved the following result:
Let $A,C\in M_{m,n}(\C)$ and $B\in M_{n,m}(\C)$. Then
\begin{equation*}
\norm{ABC-CBA}_F^2
\le
2\norm{C}_2^2\norm{A}_{(2),2}^2\norm{B}_F^2.
\end{equation*}
When $m=1$ or $n=1$, Nobori gave a stronger estimate with constant $1$.
Moreover, Nobori proposed the following conjecture~\cite[Conjecture~3.1]{Nobori2025}.

\begin{conjecture}\label{conjNobori}
For all $A,C\in M_{m,n}(\C)$ and $B\in M_{n,m}(\C)$ with $m,n\geq2$,
\begin{equation}\label{eq:conj}
\norm{ABC-CBA}_F^2
\le
2\norm{B}_2^2\norm{A}_{(2),2}^2\norm{C}_F^2.
\end{equation}
\end{conjecture}
\noindent
In fact, Nobori \cite{Nobori2025} showed that \eqref{eq:conj} always holds when $m=1$ or $n=1$.
Nobori also proved \cite{Nobori2025} that Conjecture~\ref{conjNobori} implies the following inequality:
\begin{equation}\label{eqNobori01}
\norm{ABC-CBA}_F^2
\le
\norm{B}_2^2\norm{A\otimes C-C\otimes A}_F^2,
\end{equation}
where $X\otimes Y$ denotes the Kronecker product of $X$ and $Y$.

The paper is organized as follows. 
Section \ref{Preliminaries} gives some definitions and lemmas. 
Section \ref{sec:newproof} provides a complete proof of the conjecture, which also shows that \eqref{eqNobori01} holds, and demonstrates that $2$ is the optimal constant for any fixed $m,n\ge 2$. 
Section \ref{Equality} gives necessary and sufficient conditions for equality in \eqref{eq:conj}.

\section{Preliminaries}\label{Preliminaries}
In this section, we present the idea for proving Conjecture \ref{conjNobori} and give some concepts and lemmas needed for the proof. 
If one of the matrices $A$, $B$, and $C$ is the zero matrix, then \eqref{eq:conj} always holds, and equality holds as well.
Therefore, throughout this paper, we assume that none of $A$, $B$, and $C$ is the zero matrix.
Next, we rewrite \eqref{eq:conj} in an equivalent form.

\begin{lemma}
\label{le:eqconj31}
Let $m,n\ge2$, $A,C\in\M_{m,n}(\C)$, and $B\in\M_{n,m}(\C)$. Then \eqref{eq:conj} holds if and only if 
\begin{align}\label{eqconj31}
\norm{\Phi(D)}_F^2
\le
2\norm{A}_{(2),2}^2\norm{C}_F^2,
\end{align}
where $D=\frac{B}{\norm{B}_2}$, $\Phi(Z)=AZC-CZA$.
\end{lemma}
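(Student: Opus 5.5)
This is a homogeneity argument. By the standing assumption of this section, $B\neq\mathbf{0}$, so $\norm{B}_2=\sigma_1(B)>0$. Hence $D=B/\norm{B}_2$ is well defined and satisfies $\norm{D}_2=1$. The plan is to show that the two sides of \eqref{eq:conj} are exactly $\norm{B}_2^2$ times the corresponding sides of \eqref{eqconj31}, and then divide by this positive factor.

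\textbf{Step 1: linearity of $\Phi$.} The map $\Phi(Z)=AZC-CZA$ is linear in $Z\in M_{n,m}(\C)$. Therefore
\[
ABC-CBA=\Phi(B)=\Phi(\norm{B}_2D)=\norm{B}_2\,\Phi(D).
\]
Since the Frobenius norm is absolutely homogeneous,
\[
\norm{ABC-CBA}_F^2=\norm{B}_2^2\,\norm{\Phi(D)}_F^2 .
\]

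\textbf{Step 2: divide through.} Substituting into \eqref{eq:conj} turns it into
\[
\norm{B}_2^2\norm{\Phi(D)}_F^2\le 2\norm{B}_2^2\norm{A}_{(2),2}^2\norm{C}_F^2 .
\]
Because $\norm{B}_2^2>0$, dividing by it (or multiplying back by it) is reversible. This gives the equivalence with \eqref{eqconj31} in both directions.

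\textbf{Degenerate case.} If one allowed $B=\mathbf{0}$, then $D$ would not be defined. However, \eqref{eq:conj} holds trivially in that case, and the paper has already excluded it, so no genuine obstacle arises. The only point needing care is to state explicitly that $\norm{B}_2\neq0$ and that $\norm{D}_2=1$, since the latter normalization is what the subsequent argument uses.
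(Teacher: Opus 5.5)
Your proposal is correct and matches the paper's proof: write $B=\norm{B}_2 D$, use linearity of $\Phi$ to get $\norm{ABC-CBA}_F^2=\norm{B}_2^2\norm{\Phi(D)}_F^2$, and divide by the positive factor $\norm{B}_2^2$. You state slightly more explicitly than the paper that this division is reversible and that $B\neq\mathbf{0}$ is a standing assumption, which is the only point needing care.
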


\begin{proof}
Since $B$ is not the zero matrix, $D$ is well defined. In fact, we have $B=\norm{B}_2D$. So
\begin{align*}
ABC-CBA
&=A(\norm{B}_2D)C-C(\norm{B}_2D)A\\
&=\norm{B}_2\Phi(D).
\end{align*}
Hence,
\begin{align*}
\norm{ABC-CBA}_F^2=\norm{B}_2^2\norm{\Phi(D)}_F^2.
\end{align*}
This proves the equivalence.
\end{proof}

We note that $\norm{D}_2=1$. 
Hence, instead of working with a general matrix $B$, we work with a contraction matrix $D$, whose standard definition is given below.

\begin{definition}\cite{HornJohnson1991}
A matrix $X\in M_{n,m}(\C)$ is a contraction if $\norm{X}_2\le1$.
\end{definition}

For square matrices, the fact that a contraction is a convex combination of unitary matrices is standard; see Zhang~\cite{Zhang2011}. 
However, a non-square contraction matrix can still be written as a convex combination of isometries or coisometries. 
We build on this through the following definitions and facts.

\begin{definition}\cite{Kubrusly2003}
A matrix $Q\in M_{n,m}(\mathbb C)$ is called an isometry if $Q^*Q=I_m$, and a coisometry if $QQ^*=I_n$.
\end{definition}

\begin{remark}
If $Q$ is an isometric matrix, then for any $X\in M_{m,n}(\mathbb C)$,
$$
(QX)^*(QX)=X^*X.
$$
Thus, $QX$ and $X$ have the same nonzero singular values, possibly with additional zero singular values. Hence,
\begin{align}\label{eq:semiunitary-tall}
\norm{QX}_F=\norm{X}_F,
~
\norm{QX}_{(2),2}=\norm{X}_{(2),2}.
\end{align}
Similarly, if $Q$ is a coisometric matrix, then
\begin{align}\label{eq:semiunitary-wide}
\norm{XQ}_F=\norm{X}_F,
~
\norm{XQ}_{(2),2}=\norm{X}_{(2),2}.
\end{align}
\end{remark}

\begin{lemma}
\label{lem:semiunitary-endpoint}
If $m,n\ge2$, $A,C\in\M_{m,n}(\C)$, and $Q\in M_{n,m}(\mathbb C)$ is an isometric or coisometric matrix, then
\begin{equation*}
\norm{AQC-CQA}_F^2
\le
2\norm{A}_{(2),2}^2\norm{C}_F^2.
\end{equation*}
\end{lemma}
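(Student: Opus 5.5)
The plan is to reduce the statement to the sharpened B\"ottcher--Wenzel inequality \eqref{eq:BW} for square matrices. The (co)isometry $Q$ lets us turn the generalized commutator $AQC-CQA$ into an ordinary commutator of square matrices without changing any of the norms involved. We split into the two cases in the hypothesis.

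\emph{Case 1: $Q$ is an isometry}, i.e.\ $Q^*Q=I_m$ (so necessarily $n\ge m$). Left-multiply by $Q$:
\begin{equation*}
Q(AQC-CQA)=(QA)(QC)-(QC)(QA).
\end{equation*}
This is a commutator of the two $n\times n$ matrices $QA$ and $QC$. By \eqref{eq:semiunitary-tall} applied to $X=AQC-CQA\in M_{m,n}(\C)$, the left side has the same Frobenius norm as $AQC-CQA$. Applying \eqref{eq:BW} in $M_n(\C)$ gives
\begin{equation*}
\norm{AQC-CQA}_F^2\le 2\norm{QA}_{(2),2}^2\norm{QC}_F^2 .
\end{equation*}
Using \eqref{eq:semiunitary-tall} once more, $\norm{QA}_{(2),2}=\norm{A}_{(2),2}$ and $\norm{QC}_F=\norm{C}_F$, which yields the claim.

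\emph{Case 2: $Q$ is a coisometry}, i.e.\ $QQ^*=I_n$ (so $m\ge n$). Right-multiply by $Q$:
\begin{equation*}
(AQC-CQA)Q=(AQ)(CQ)-(CQ)(AQ).
\end{equation*}
This is a commutator in $M_m(\C)$. By \eqref{eq:semiunitary-wide}, we have $\norm{(AQC-CQA)Q}_F=\norm{AQC-CQA}_F$, $\norm{AQ}_{(2),2}=\norm{A}_{(2),2}$ and $\norm{CQ}_F=\norm{C}_F$. Applying \eqref{eq:BW} again finishes this case.

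The only point needing care is choosing the side on which to multiply by $Q$ so that norm preservation goes in the right direction. Multiplying on the wrong side, e.g.\ right-multiplying by an isometry, would only give $\norm{ZQ}_F\le\norm{Z}_F$, which is the unhelpful inequality. With the sides chosen as above, everything is an identity plus one application of \eqref{eq:BW}, so I expect no genuine obstacle. The substantive difficulty of the conjecture lies instead in passing from (co)isometries to general contractions $D$ via convexity of $Z\mapsto\norm{\Phi(Z)}_F^2$.
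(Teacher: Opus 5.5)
Your proof is correct and matches the paper's argument: in the isometric case you left-multiply by $Q$, in the coisometric case you right-multiply by $Q$, and either way you get an ordinary commutator of square matrices. You then bound it with \eqref{eq:BW} and use the norm invariances \eqref{eq:semiunitary-tall} and \eqref{eq:semiunitary-wide}.
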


\begin{proof}
If $Q$ is an isometric matrix, then by \eqref{eq:semiunitary-tall},
\begin{align*}
\norm{AQC-CQA}_F^2
&=\norm{Q(AQC-CQA)}_F^2\\
&=\norm{(QA)(QC)-(QC)(QA)}_F^2\\
&\le2\norm{QA}_{(2),2}^2\norm{QC}_F^2~(\text{by~\eqref{eq:BW}})\\
&=2\norm{A}_{(2),2}^2\norm{C}_F^2.
\end{align*}
If $Q$ is a coisometric matrix, then by \eqref{eq:semiunitary-wide}, we apply the same estimate to
$$
(AQC-CQA)Q=(AQ)(CQ)-(CQ)(AQ).
$$
\end{proof}

When $B$ is an isometry or a coisometry, \eqref{eq:conj} holds. 
This is important for proving \eqref{eq:conj}. 
However, to explicitly write a non-square contraction matrix as a convex combination of isometries or coisometries, several results are still needed. 
The first is the singular value decomposition \cite{zhan2013,Zhang2011}.

\begin{theorem}
Let $X\in M_{n,m}(\mathbb C)$ and $r=\min(m,n)$. Then there exist two orthonormal sets
\begin{align*}
  \{u_1,\ldots,u_r\}\subseteq\mathbb{C}^n,~
  \{v_1,\ldots,v_r\}\subseteq\mathbb{C}^m
\end{align*}
such that
$$
X=\sum_{i=1}^{r}\sigma_i(X)R_i, \text {~where~} R_i=u_iv_i^*.
$$
\end{theorem}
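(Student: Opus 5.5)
The statement is the singular value decomposition for a rectangular matrix $X\in M_{n,m}(\C)$. I would derive it from the spectral theorem for Hermitian matrices, applied to whichever Gram matrix is smaller. Without loss of generality take $m\le n$, so $r=m$. The case $m>n$ follows by applying the result to $X^*$ and then taking conjugate transposes, which exchanges the roles of the $u_i$ and $v_i$.

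\textbf{Step 1 (eigenvectors of $X^*X$).} The matrix $X^*X\in M_m(\C)$ is Hermitian and positive semidefinite. By the spectral theorem it has an orthonormal eigenbasis $v_1,\ldots,v_m$ of $\C^m$ with eigenvalues $\lambda_1\ge\cdots\ge\lambda_m\ge0$. By the definition of singular values in the introduction, $\lambda_i=\sigma_i(X)^2$.

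\textbf{Step 2 (the vectors $u_i$).} Let $k$ be the number of nonzero $\sigma_i$. For $i\le k$ set $u_i=Xv_i/\sigma_i(X)$. Then
$$u_i^*u_j=\frac{v_i^*X^*Xv_j}{\sigma_i(X)\sigma_j(X)}=\frac{\sigma_j(X)^2\,v_i^*v_j}{\sigma_i(X)\sigma_j(X)}=\delta_{ij},$$
so $u_1,\ldots,u_k$ are orthonormal. For $i>k$ we have $\norm{Xv_i}^2=v_i^*X^*Xv_i=0$, so $Xv_i=0$. Since $m\le n$, I extend $u_1,\ldots,u_k$ to an orthonormal set $u_1,\ldots,u_m$ in $\C^n$ (Gram--Schmidt).

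\textbf{Step 3 (verify the identity).} Because $\{v_i\}$ is an orthonormal basis, $\sum_i v_iv_i^*=I_m$. Hence
$$X=\sum_{i=1}^m Xv_iv_i^*=\sum_{i\le k}\sigma_i(X)u_iv_i^*,$$
and the terms with $i>k$ can be added freely because their coefficients $\sigma_i(X)$ are zero. This gives $X=\sum_{i=1}^r\sigma_i(X)R_i$ with $R_i=u_iv_i^*$.

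\textbf{Main point of care.} The only delicate step is the zero singular values. There, $u_i$ cannot be defined as $Xv_i/\sigma_i(X)$, so it must be obtained by completing the orthonormal set, which is possible because $r\le n$. Everything else follows directly from the spectral theorem.
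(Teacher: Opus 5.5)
Your proof is correct. It is the standard derivation of the singular value decomposition from the spectral theorem applied to $X^*X$; the paper does not prove this statement but quotes it from standard matrix-analysis texts, where essentially your argument appears. One detail is worth stating in your reduction of the case $m>n$ to $X^*$: you need $\sigma_i(X^*)=\sigma_i(X)$, which holds because under the paper's definition both sets of singular values come from the eigenvalues of the same matrix $XX^*$.
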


The following concepts and results are used to construct convex combinations and isometries (coisometries).

Let $\Omega_r$ be the Cartesian product of $r\geq1$ copies of the two-point set $\{-1,1\}$. 
Thus, $\Omega_r=\{-1,1\}^r$.
For any $\varepsilon=(\varepsilon_1,\ldots,\varepsilon_r)\in\Omega_r$, its $i$th component $\varepsilon_i$ is either $-1$ or $1$.

Let
\begin{equation}
c_i(\varepsilon_i)=\frac{1+\varepsilon_i s_i}{2},
\label{eq:local-sign-coefficient}
\end{equation}
where $i=1,\ldots,r$ and $s_1,\ldots,s_r\in[0,1]$.
Also, let
\begin{equation}
p_\varepsilon=\prod_{i=1}^r c_i(\varepsilon_i).
\label{eq:local-sign-coefficient01}
\end{equation}

\begin{remark}
\label{lem:local-sign-coefficients0}
Since $\varepsilon_i$ is either $-1$ or $1$,
$$
c_i(1)=\frac{1+s_i}{2}>0,
~
c_i(-1)=\frac{1-s_i}{2}\geq0.
$$
Thus, $c_i(\varepsilon_i)\geq0$. In particular, $c_i(\varepsilon_i)=0$ only when $s_i=1$ and $\varepsilon_i=-1$.
Also,
$$
c_i(1)+c_i(-1)=1,
~
c_i(1)-c_i(-1)=s_i.
$$
\end{remark}

\begin{remark}
\label{lem:local-sign-coefficients}
Since $c_i(\varepsilon_i)\geq0$, we have $p_\varepsilon\geq0$.
Moreover, $p_\varepsilon>0$ if and only if there is no index $i$ for which $s_i=1$ and $\varepsilon_i=-1$.
In particular, for the all-one sign vector
$$
\boldsymbol{1}=(1,\ldots,1)\in\Omega_r,
$$
we have
$p_{\boldsymbol{1}}>0.$
Also, if $s_j<1$, define
$\varepsilon^+:=\boldsymbol{1},$
and let $\varepsilon^-\in\Omega_r$ satisfy
$$
\varepsilon_j^-=-1,
~
\varepsilon_i^-=1
\quad(i\ne j).
$$
Then
$
p_{\varepsilon^+}>0,
~
p_{\varepsilon^-}>0.
$
\end{remark}

Next, we introduce other properties of $p_\varepsilon$.

\begin{proposition}
\label{prop:sign-weight-moments}
\begin{equation*}
\sum_{\varepsilon\in\Omega_r}p_\varepsilon=1.
\end{equation*}
Thus, $\{p_\varepsilon\}_{\varepsilon\in\Omega_r}$ are convex weights.
Moreover, for each fixed $j\in\{1,\ldots,r\}$,
\begin{equation*}
\sum_{\varepsilon\in\Omega_r}
p_\varepsilon\varepsilon_j=s_j.
\end{equation*}
\end{proposition}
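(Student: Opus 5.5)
The plan is to exploit the product structure of $p_\varepsilon$: it is the joint law of $r$ independent $\pm1$ variables, the $i$th taking value $1$ with probability $c_i(1)$ and $-1$ with probability $c_i(-1)$. Both identities then reduce to one-variable facts recorded in the Remark on the local sign coefficients, namely $c_i(1)+c_i(-1)=1$ and $c_i(1)-c_i(-1)=s_i$. I will argue purely algebraically, by factoring sums over $\Omega_r=\{-1,1\}^r$ into products of sums over single coordinates.

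For the first identity, since $p_\varepsilon=\prod_{i=1}^r c_i(\varepsilon_i)$ and $\Omega_r$ is a Cartesian product, distributivity gives
\begin{equation*}
\sum_{\varepsilon\in\Omega_r}p_\varepsilon=\prod_{i=1}^r\bigl(c_i(1)+c_i(-1)\bigr)=\prod_{i=1}^r 1=1 .
\end{equation*}
Together with $p_\varepsilon\ge0$ from the Remark on $p_\varepsilon$, this shows that $\{p_\varepsilon\}$ are convex weights. If one prefers not to rely on distributivity over a product set directly, the same identity follows by induction on $r$: split $\varepsilon=(\varepsilon',\varepsilon_r)$ with $\varepsilon'\in\Omega_{r-1}$ and sum over $\varepsilon_r$ first.

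For the second identity, fix $j$. The factor $\varepsilon_j$ depends only on the $j$th coordinate, so factoring again gives
\begin{equation*}
\sum_{\varepsilon\in\Omega_r}p_\varepsilon\varepsilon_j=\Bigl(\sum_{\varepsilon_j\in\{-1,1\}}c_j(\varepsilon_j)\varepsilon_j\Bigr)\prod_{i\ne j}\bigl(c_i(1)+c_i(-1)\bigr)=\bigl(c_j(1)-c_j(-1)\bigr)\cdot1=s_j .
\end{equation*}
When $r=1$ the product over $i\ne j$ is empty and equals $1$.

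There is no genuine obstacle here. The only point requiring care is justifying the interchange of the sum over the product set with the product of coordinate sums, that is, the distributive law $\sum_{\varepsilon\in\Omega_r}\prod_i f_i(\varepsilon_i)=\prod_i\sum_{t\in\{-1,1\}}f_i(t)$. I would state this once, with $f_i=c_i$ for $i\neq j$ and $f_j(t)=t\,c_j(t)$, and prove it by the induction on $r$ indicated above.
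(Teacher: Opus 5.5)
Your proposal is correct and follows essentially the same route as the paper. The paper also factors the sum over $\Omega_r$ coordinatewise by the distributive law and then applies $c_i(1)+c_i(-1)=1$ and $c_i(1)-c_i(-1)=s_i$, writing out the nested sums explicitly rather than first stating the general factorization identity.
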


\begin{proof}
Summing over $\Omega_r$ is the same as summing over each coordinate $\varepsilon_1,\ldots,\varepsilon_r$ separately. By the distributive law for finite sums,
\begin{align*}
\sum_{\varepsilon\in\Omega_r}p_\varepsilon
&=
\sum_{\varepsilon_1\in\{-1,1\}}
\cdots
\sum_{\varepsilon_r\in\{-1,1\}}
\prod_{i=1}^r c_i(\varepsilon_i)\\
&=
\sum_{\varepsilon_1\in\{-1,1\}}
\cdots
\sum_{\varepsilon_{r-1}\in\{-1,1\}}
\left(\left(\prod_{i=1}^{r-1} c_i(\varepsilon_i)\right)(c_r(1)+c_r(-1))\right)\\
&=
\prod_{i=1}^r
\bigl(c_i(1)+c_i(-1)\bigr)\\
&=1.~(\text{by Remark~\ref{lem:local-sign-coefficients0}})
\end{align*}

Now fix $j\in\{1,\ldots,r\}$. Then
\begin{align*}
\sum_{\varepsilon\in\Omega_r}p_\varepsilon\varepsilon_j
&=
\sum_{\varepsilon_1\in\{-1,1\}}
\cdots
\sum_{\varepsilon_r\in\{-1,1\}}
\varepsilon_jc_j(\varepsilon_j)\prod_{\substack{i=1\\i\ne j}}^r c_i(\varepsilon_i)\\
&=
\sum_{\varepsilon_1\in\{-1,1\}}
\cdots
\sum_{\varepsilon_{j-1}\in\{-1,1\}}
\sum_{\varepsilon_{j+1}\in\{-1,1\}}
\cdots
\sum_{\varepsilon_r\in\{-1,1\}}
\left(\bigl(c_j(1)-c_j(-1)\bigr)\prod_{\substack{i=1\\i\ne j}}^r c_i(\varepsilon_i)\right)\\
&=
\bigl(c_j(1)-c_j(-1)\bigr)
\prod_{\substack{i=1\\i\ne j}}^r
\bigl(c_i(1)+c_i(-1)\bigr)\\
&=s_j.~(\text{by Remark~\ref{lem:local-sign-coefficients0}})
\end{align*}
\end{proof}

With these preliminaries, we now establish the following results.

\begin{lemma}
\label{lem:convex-decomp}
Let $D\in M_{n,m}(\C)$ be a contraction and $r=\min\{m,n\}$.
If a singular value decomposition $\sum_{i=1}^r\sigma_i(D)R_i$ of $D$ is fixed, then
\begin{equation}
D
=
\sum_{\varepsilon\in\Omega_r}
p_\varepsilon Q_\varepsilon,
\label{eq:explicit-convex-decomposition}
\end{equation}
where
$p_\varepsilon
=
\prod_{i=1}^r\frac{1+\varepsilon_i\sigma_i(D)}{2}\geq0$,
$\varepsilon=(\varepsilon_1,\ldots,\varepsilon_r)\in\Omega_r$,
$\sum_{\varepsilon\in\Omega_r}p_\varepsilon=1$, and
\begin{equation}
Q_\varepsilon=
\sum_{i=1}^r\varepsilon_iR_i.
\label{eq:signed-endpoint}
\end{equation}
Moreover, \eqref{eq:signed-endpoint} has the following properties:
\begin{enumerate}[label=(\roman*)]
\item If $\sigma_j(D)<1$, and $\varepsilon^+$ and $\varepsilon^-$ are defined as in Remark~\ref{lem:local-sign-coefficients}, then
\begin{equation}
Q_{\varepsilon^+}-Q_{\varepsilon^-}
=
2R_j.
\label{eq:single-sign-flip}
\end{equation}

\item If $n\ge m$, then for every $\varepsilon\in\Omega_r$, $Q_\varepsilon$ is an isometric matrix.

\item If $n< m$, then for every $\varepsilon\in\Omega_r$, $Q_\varepsilon$ is a coisometric matrix.
\end{enumerate}
\end{lemma}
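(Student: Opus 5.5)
The plan is to specialize Proposition~\ref{prop:sign-weight-moments} to $s_i=\sigma_i(D)$. Since $D$ is a contraction, $0\le\sigma_i(D)\le\norm{D}_2\le1$, so each $s_i\in[0,1]$ as required. Remarks~\ref{lem:local-sign-coefficients0} and \ref{lem:local-sign-coefficients} then give $p_\varepsilon\ge0$, and the first identity of the proposition gives $\sum_\varepsilon p_\varepsilon=1$.

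For the decomposition \eqref{eq:explicit-convex-decomposition}, I would expand and interchange finite sums:
\[
\sum_{\varepsilon\in\Omega_r}p_\varepsilon Q_\varepsilon=\sum_{i=1}^r\Bigl(\sum_{\varepsilon\in\Omega_r}p_\varepsilon\varepsilon_i\Bigr)R_i=\sum_{i=1}^r\sigma_i(D)R_i=D,
\]
where the middle step is the second identity of Proposition~\ref{prop:sign-weight-moments}.

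For (i), $\varepsilon^+$ and $\varepsilon^-$ differ only in coordinate $j$. In $Q_{\varepsilon^+}-Q_{\varepsilon^-}$ every term with $i\ne j$ cancels, which leaves $(1-(-1))R_j=2R_j$. The hypothesis $\sigma_j(D)<1$ is needed only to ensure, via Remark~\ref{lem:local-sign-coefficients}, that both weights $p_{\varepsilon^\pm}$ are positive; the identity \eqref{eq:single-sign-flip} itself holds regardless.

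For (ii) and (iii), I write $Q_\varepsilon=U\Sigma_\varepsilon V^*$, where $U=[u_1,\dots,u_r]\in M_{n,r}(\C)$ and $V=[v_1,\dots,v_r]\in M_{m,r}(\C)$ have orthonormal columns and $\Sigma_\varepsilon=\operatorname{diag}(\varepsilon_1,\dots,\varepsilon_r)$ satisfies $\Sigma_\varepsilon^2=I_r$. If $n\ge m$, then $r=m$, so $V$ is square with orthonormal columns and hence unitary. This gives
\[
Q_\varepsilon^*Q_\varepsilon=V\Sigma_\varepsilon U^*U\Sigma_\varepsilon V^*=V\Sigma_\varepsilon^2V^*=VV^*=I_m,
\]
so $Q_\varepsilon$ is an isometry. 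If $n<m$, then $r=n$, so $U$ is unitary, and symmetrically
\[
Q_\varepsilon Q_\varepsilon^*=U\Sigma_\varepsilon V^*V\Sigma_\varepsilon U^*=UU^*=I_n,
\]
so $Q_\varepsilon$ is a coisometry.

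The only point requiring care is this last step: recognizing that the square factor among $U,V$ is unitary, so that the products collapse to the full identity $I_m$ or $I_n$ rather than merely a projection. The rest is direct bookkeeping with Proposition~\ref{prop:sign-weight-moments}.
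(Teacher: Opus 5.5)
Your proof is correct and matches the paper's argument step for step. You specialize Proposition~\ref{prop:sign-weight-moments} to $s_i=\sigma_i(D)$, interchange the finite sums to get the decomposition, and cancel the coordinates $i\ne j$ for (i); for (ii) and (iii) you use that the square factor among the singular-vector matrices is unitary, and your $VV^*=I_m$ (resp.\ $UU^*=I_n$) is exactly the paper's $\sum_i v_iv_i^*=I_m$, written in matrix form $Q_\varepsilon=U\Sigma_\varepsilon V^*$ rather than as a sum of rank-one terms.
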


\begin{proof}
Since $D$ is a contraction, we have $0\le\sigma_i(D)\le1~(i=1,\ldots,r).$
Thus, by \eqref{eq:local-sign-coefficient} and \eqref{eq:local-sign-coefficient01}, $p_\varepsilon$ satisfies the required definition.
By Remark~\ref{lem:local-sign-coefficients} and Proposition~\ref{prop:sign-weight-moments}, 
$p_\varepsilon\ge0$ for every $\varepsilon\in\Omega_r$,
and $\sum_{\varepsilon\in\Omega_r}p_\varepsilon=1$.
We next prove \eqref{eq:explicit-convex-decomposition}. By \eqref{eq:signed-endpoint} and finite additivity,
\begin{align*}
\sum_{\varepsilon\in\Omega_r}
p_\varepsilon Q_\varepsilon
&=
\sum_{\varepsilon\in\Omega_r}
p_\varepsilon
\left(
\sum_{i=1}^r\varepsilon_iR_i
\right)\\
&=
\sum_{i=1}^r
\left(
\sum_{\varepsilon\in\Omega_r}
p_\varepsilon\varepsilon_i
\right)R_i\\
&=\sum_{i=1}^r\sigma_i(D)R_i
~(\text{by Proposition~\ref{prop:sign-weight-moments}})\\
&=D.
\end{align*}
We now prove the stated properties of $Q_\varepsilon$.
Since $\varepsilon^+$ and $\varepsilon^-$ differ only in the $j$th coordinate, \eqref{eq:signed-endpoint} gives
\begin{align*}
Q_{\varepsilon^+}-Q_{\varepsilon^-}
&=
\sum_{i=1}^r
\left(
\varepsilon_i^+-\varepsilon_i^-
\right)R_i\\
&=
\left(
\varepsilon_j^+-\varepsilon_j^-
\right)R_j\\
&=2R_j.
\end{align*}
Thus, \eqref{eq:single-sign-flip} holds.
By \eqref{eq:signed-endpoint},
$$
Q_\varepsilon
=
\sum_{i=1}^r\varepsilon_i u_iv_i^*.
$$
First assume $n\ge m$. Then $r=m$, $\{u_1,\ldots,u_r\}\subseteq\mathbb C^n$ is an orthonormal set, and
$$
v_1,\ldots,v_m
$$
form an orthonormal basis of $\C^m$.
Hence,
\begin{align*}
Q_\varepsilon^*Q_\varepsilon
&=
\left(
\sum_{i=1}^m\varepsilon_i v_iu_i^*
\right)
\left(
\sum_{j=1}^m\varepsilon_j u_jv_j^*
\right)\\
&=
\sum_{i,j=1}^m
\varepsilon_i\varepsilon_j
v_i(u_i^*u_j)v_j^*\\
&=
\sum_{i=1}^m
\varepsilon_i^2v_iv_i^*\\
&=I_m
~(\text{since }\varepsilon_i^2=1).
\end{align*}
Thus, $Q_\varepsilon$ is an isometric matrix.

Now assume $n< m$. The same argument shows that $Q_\varepsilon$ is a coisometric matrix. In particular, when $m=n$, every $Q_\varepsilon$ is a unitary matrix.
\end{proof}

The following result is important for proving that \eqref{eq:conj} holds and for characterizing the necessary and sufficient condition for equality in it.

\begin{lemma}
\label{lem:jensen}
Let $N$ be a positive integer, let $p_1,\ldots,p_N$ be nonnegative real numbers such that
$
\sum_{i=1}^N p_i=1,
$
and let $X_1,\ldots,X_N\in M_{m,n}(\C)$.
Then
\begin{equation}
\norm{\sum_{i=1}^N p_iX_i}_F^2
\le
\sum_{i=1}^N p_i\norm{X_i}_F^2.
\label{eq:jensen}
\end{equation}
Equality holds in \eqref{eq:jensen} if and only if
$
X_i=X_j
$
for any $i,j$ such that $p_i>0$ and $p_j>0$.
\end{lemma}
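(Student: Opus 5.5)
The plan is to prove \eqref{eq:jensen} through an exact variance identity, which yields the inequality and the equality condition in one step. Set $\bar X=\sum_{i=1}^N p_iX_i$. Expand $\sum_{i=1}^N p_i\norm{X_i-\bar X}_F^2$ using the inner product $\ip{\cdot}{\cdot}$ and its conjugate symmetry:
\begin{align*}
\sum_{i=1}^N p_i\norm{X_i-\bar X}_F^2
&=\sum_{i=1}^N p_i\norm{X_i}_F^2-\sum_{i=1}^N p_i\ip{X_i}{\bar X}-\sum_{i=1}^N p_i\ip{\bar X}{X_i}+\Bigl(\sum_{i=1}^N p_i\Bigr)\norm{\bar X}_F^2.
\end{align*}
The weights $p_i$ are real, so $\sum_i p_i\ip{X_i}{\bar X}=\ip{\bar X}{\bar X}=\norm{\bar X}_F^2$, and likewise for the conjugate term. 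Combined with $\sum_i p_i=1$, this gives
$$
\sum_{i=1}^N p_i\norm{X_i}_F^2-\norm{\sum_{i=1}^N p_iX_i}_F^2=\sum_{i=1}^N p_i\norm{X_i-\bar X}_F^2\ge0,
$$
which is exactly \eqref{eq:jensen}.

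For the equality case, note that equality holds if and only if the right-hand side vanishes. Every term is nonnegative, so this happens if and only if $X_i=\bar X$ for every $i$ with $p_i>0$. That condition says precisely that $X_i=X_j$ whenever $p_i,p_j>0$.

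For the converse direction, suppose all $X_i$ with $p_i>0$ equal a common matrix $Y$. Then $\bar X=\sum_{p_i>0}p_iY=Y$, so each term with $p_i>0$ vanishes, and terms with $p_i=0$ contribute nothing. Hence equality holds.

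There is no real obstacle here. The only point needing care is the equality characterization: the conclusion $X_i=\bar X$ only holds for indices with $p_i>0$, so the indices with $p_i=0$ must be excluded from the statement. The identity above does this automatically.
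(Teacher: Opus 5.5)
Your proof is correct and uses the same strategy as the paper: write the Jensen gap as an exact sum of nonnegative terms, then read off both the inequality and the equality case. The only difference is that you express the gap as $\sum_i p_i\norm{X_i-\bar X}_F^2$, whereas the paper uses the equivalent pairwise form $\frac12\sum_{i,j}p_ip_j\norm{X_i-X_j}_F^2$. The pairwise form gives the stated equality condition ($X_i=X_j$ whenever $p_i,p_j>0$) directly, while yours needs the short extra step through $\bar X$, which you supplied correctly.
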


\begin{proof}
For any $i,j$,
\begin{align*}
\norm{X_i-X_j}_F^2
&=
\norm{X_i}_F^2+\norm{X_j}_F^2
-2\operatorname{Re}\ip{X_i}{X_j},
\end{align*}
where
$
\operatorname{Re}\ip{X}{Y}
=
\frac12
\left(
\ip{X}{Y}+\ip{Y}{X}
\right).
$
Hence,
\begin{align*}
&\frac12\sum_{i,j=1}^N
p_ip_j\norm{X_i-X_j}_F^2\\
&=
\frac12\sum_{i,j=1}^N
p_ip_j\norm{X_i}_F^2
+
\frac12\sum_{i,j=1}^N
p_ip_j\norm{X_j}_F^2
-
\sum_{i,j=1}^N
p_ip_j\operatorname{Re}\ip{X_i}{X_j}.
\end{align*}
For the first term, using $\sum_{j=1}^N p_j=1$, we obtain
\begin{align*}
\frac12\sum_{i,j=1}^N
p_ip_j\norm{X_i}_F^2
&=
\frac12
\sum_{i=1}^N
p_i\norm{X_i}_F^2
\sum_{j=1}^N p_j\\
&=
\frac12
\sum_{i=1}^N
p_i\norm{X_i}_F^2.
\end{align*}
Similarly, the second term satisfies
\begin{align*}
\frac12\sum_{i,j=1}^N
p_ip_j\norm{X_j}_F^2
&=
\frac12
\sum_{j=1}^N
p_j\norm{X_j}_F^2.
\end{align*}
Thus, the sum of the first two terms is
$$
\sum_{i=1}^N p_i\norm{X_i}_F^2.
$$
For the inner-product term, since $p_i$ and $p_j$ are real numbers,
\begin{align*}
\sum_{i,j=1}^N
p_ip_j\ip{X_i}{X_j}
&=
\ip{\sum_{i=1}^N p_iX_i}
{\sum_{j=1}^N p_jX_j}\\
&=
\norm{\sum_{i=1}^N p_iX_i}_F^2.
\end{align*}
Since $\norm{\sum_{i=1}^N p_iX_i}_F^2$ is real,
\begin{align*}
\sum_{i,j=1}^N
p_ip_j\operatorname{Re}\ip{X_i}{X_j}
&=
\operatorname{Re}
\sum_{i,j=1}^N
p_ip_j\ip{X_i}{X_j}\\
&=
\norm{\sum_{i=1}^N p_iX_i}_F^2.
\end{align*}
Therefore,
\begin{equation}
\begin{aligned}
\sum_{i=1}^N p_i\norm{X_i}_F^2-
\norm{\sum_{i=1}^N p_iX_i}_F^2
=
\frac12\sum_{i,j=1}^N
p_ip_j\norm{X_i-X_j}_F^2.
\label{eq:jensen-gap}
\end{aligned}
\end{equation}
Every term on the right-hand side of \eqref{eq:jensen-gap} is nonnegative. Therefore, \eqref{eq:jensen} holds.

Finally, we discuss the equality condition. By \eqref{eq:jensen-gap}, equality holds in \eqref{eq:jensen} if and only if
$$
\frac12\sum_{i,j=1}^N
p_ip_j\norm{X_i-X_j}_F^2=0.
$$
This holds exactly when
$$
p_ip_j\norm{X_i-X_j}_F^2=0,
$$
which in turn holds precisely when $X_i=X_j$ for any $p_i>0$ and $p_j>0$.
\end{proof}

\section{Proof of Conjecture~\ref{conjNobori}}\label{sec:newproof}
In this section, we first give a complete proof of Conjecture~\ref{conjNobori}, and then show that the constant $2$ in \eqref{eq:conj} is optimal for every fixed $m,n\ge2$.

\begin{theorem}
\label{thm:conj31}
Let $m,n\ge2$, $A,C\in\M_{m,n}(\C)$, and $B\in\M_{n,m}(\C)$. Then \eqref{eq:conj} holds.
\end{theorem}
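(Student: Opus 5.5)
The plan is to reduce the theorem to the isometric/coisometric case of Lemma~\ref{lem:semiunitary-endpoint}. The reduction uses the explicit convex decomposition of Lemma~\ref{lem:convex-decomp} together with the Jensen-type inequality of Lemma~\ref{lem:jensen}.

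By Lemma~\ref{le:eqconj31}, it suffices to prove \eqref{eqconj31} for $D=B/\norm{B}_2$, which is a contraction with $\norm{D}_2=1$; recall that we assume $A,B,C\neq\mathbf{0}$. Set $r=\min\{m,n\}$ and fix a singular value decomposition $D=\sum_{i=1}^r\sigma_i(D)R_i$. Lemma~\ref{lem:convex-decomp} then gives
\[
D=\sum_{\varepsilon\in\Omega_r}p_\varepsilon Q_\varepsilon ,
\]
with convex weights $p_\varepsilon\ge0$ summing to $1$. Each $Q_\varepsilon$ is an isometry when $n\ge m$ and a coisometry when $n<m$.

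Since $\Phi(Z)=AZC-CZA$ is linear in $Z$, we get
\[
\Phi(D)=\sum_{\varepsilon}p_\varepsilon\Phi(Q_\varepsilon).
\]
Lemma~\ref{lem:jensen}, applied with $N=2^r$ and $X_\varepsilon=\Phi(Q_\varepsilon)\in M_{m,n}(\C)$, yields
\[
\norm{\Phi(D)}_F^2\le\sum_{\varepsilon}p_\varepsilon\norm{\Phi(Q_\varepsilon)}_F^2 .
\]
Each term satisfies $\norm{\Phi(Q_\varepsilon)}_F^2=\norm{AQ_\varepsilon C-CQ_\varepsilon A}_F^2\le 2\norm{A}_{(2),2}^2\norm{C}_F^2$ by Lemma~\ref{lem:semiunitary-endpoint}, since $m,n\ge2$ and $Q_\varepsilon$ is an isometry or a coisometry. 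Because $\sum_\varepsilon p_\varepsilon=1$, the right-hand side is at most $2\norm{A}_{(2),2}^2\norm{C}_F^2$. This is \eqref{eqconj31}, and hence \eqref{eq:conj} follows.

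The argument is essentially a chain of earlier results, so no genuine obstacle remains. The only points needing care are the following three.
\begin{enumerate}[label=(\alph*)]
\item The Jensen step must be applied to the $F$-norm squared of the images $\Phi(Q_\varepsilon)$. This works because convexity is preserved under the linear map $\Phi$.
\item Lemma~\ref{lem:semiunitary-endpoint} must be invoked with the correct shape convention: $Q_\varepsilon\in M_{n,m}(\C)$, so that $QA$, $QC$ (respectively $AQ$, $CQ$) are square of order $n$ (respectively $m$) and \eqref{eq:BW} applies.
\item The norm invariances \eqref{eq:semiunitary-tall} and \eqref{eq:semiunitary-wide} are already built into that lemma.
\end{enumerate}
The substantive content, that a non-square contraction is a convex combination of isometries or coisometries, is exactly Lemma~\ref{lem:convex-decomp}.
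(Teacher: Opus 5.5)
Your proposal is correct and follows the paper's proof essentially step for step. You normalize via Lemma~\ref{le:eqconj31}, write $D$ as the convex combination $\sum_\varepsilon p_\varepsilon Q_\varepsilon$ of isometries or coisometries from Lemma~\ref{lem:convex-decomp}, apply Lemma~\ref{lem:jensen} to the images $\Phi(Q_\varepsilon)$, and bound each term by Lemma~\ref{lem:semiunitary-endpoint}, with the shape conventions handled correctly.
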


\begin{proof}
By Lemma \ref{le:eqconj31}, it suffices to prove that \eqref{eqconj31} holds.
First assume $n\ge m$. 
By Lemma~\ref{lem:convex-decomp}, there exist $p_\varepsilon\ge0$ and isometric matrices $Q_\varepsilon$ such that
$$
D=\sum_\varepsilon p_\varepsilon Q_\varepsilon,
~
\sum_\varepsilon p_\varepsilon=1,
~
Q_\varepsilon^*Q_\varepsilon=I_m.
$$
By the linearity of $\Phi$,
$$
\Phi(D)=\sum_\varepsilon p_\varepsilon\Phi(Q_\varepsilon),
$$
where $\Phi(Q_\varepsilon)=AQ_\varepsilon C-CQ_\varepsilon A$.
By Lemma~\ref{lem:jensen},
\begin{align*}
\norm{\Phi(D)}_F^2
&\le
\sum_\varepsilon p_\varepsilon
\norm{\Phi(Q_\varepsilon)}_F^2.
\end{align*}
By Lemma~\ref{lem:semiunitary-endpoint}, for every $\varepsilon$,
\begin{equation}\label{eq:each-endpoint-bound}
\norm{\Phi(Q_\varepsilon)}_F^2
\le2\norm{A}_{(2),2}^2\norm{C}_F^2.
\end{equation}
Therefore,
\begin{align*}
\norm{\Phi(D)}_F^2
&\le
\sum_\varepsilon p_\varepsilon
\left(2\norm{A}_{(2),2}^2\norm{C}_F^2\right)\\
&=2\norm{A}_{(2),2}^2\norm{C}_F^2.
\end{align*}
This proves the case $n\ge m$.
If $m>n$, Lemma~\ref{lem:convex-decomp} writes $D$ as a convex combination of coisometric matrices. 
We then use the coisometric case of Lemma~\ref{lem:semiunitary-endpoint}; 
the remaining steps are the same. 
Thus, \eqref{eq:conj} holds.
\end{proof}

\begin{remark}
Since Theorem~\ref{thm:conj31} proves Conjecture~\ref{conjNobori}, the implication established by Nobori immediately yields \eqref{eqNobori01}.
\end{remark}

Next, we show that the constant $2$ is optimal.

\begin{proposition}
For every fixed $m,n\ge2$, the constant $2$ in \eqref{eq:conj} is best possible.
\end{proposition}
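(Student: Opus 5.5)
We will show the constant $2$ is best possible by exhibiting, for each fixed $m,n\ge2$, specific matrices $A,C\in M_{m,n}(\C)$ and $B\in M_{n,m}(\C)$ for which equality holds in \eqref{eq:conj}. Once equality is attained by nonzero matrices, no constant $c<2$ can replace $2$.

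The construction embeds the classical $2\times 2$ extremal example for the B\"ottcher--Wenzel inequality into the top-left corner. Let $E_{ij}$ denote matrix units of the appropriate size. Take
$$
A=E_{12}-E_{21},\qquad C=E_{11}-E_{22}\quad\text{in } M_{m,n}(\C),
$$
and take $B=E_{11}+E_{22}\in M_{n,m}(\C)$. These are well defined because $m,n\ge2$. Then $B$ is a partial isometry with $\norm{B}_2=1$.

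Next we compute both sides. The product $ABC$ keeps only the leading $2\times2$ blocks, which reduce to $a c$ and $c a$ with $a=\begin{pmatrix}0&1\\-1&0\end{pmatrix}$ and $c=\begin{pmatrix}1&0\\0&-1\end{pmatrix}$. The blocks outside the corner are zero, so $ABC-CBA$ is the $2\times2$ commutator $ac-ca$ padded with zeros. We have
$$
ac=\begin{pmatrix}0&-1\\-1&0\end{pmatrix},\qquad ca=\begin{pmatrix}0&1\\1&0\end{pmatrix},
$$
so $ac-ca=-2(E_{12}+E_{21})$ and $\norm{ABC-CBA}_F^2=8$.

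On the right-hand side, $A$ has singular values $1,1,0,\dots$, so $\norm{A}_{(2),2}^2=2$. Also $\norm{C}_F^2=2$ and $\norm{B}_2^2=1$. The right side is therefore $2\cdot1\cdot2\cdot2=8$, and equality holds.

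The only point needing care is the bookkeeping of sizes: $A,C$ are $m\times n$ and $B$ is $n\times m$, so $ABC$ is $m\times n$. One must check that the corner embedding commutes with these products. This holds because every matrix involved is supported on indices $\{1,2\}$ in each dimension, which is available since $m,n\ge2$.

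We conclude by noting that the same example gives, for any $c<2$, $8>c\cdot 4$. This violates the inequality with $c$ in place of $2$, so the constant $2$ is optimal.
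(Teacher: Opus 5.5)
Your proof is correct and follows the same idea as the paper: embed an extremal $2\times2$ B\"ottcher--Wenzel pair in the top-left corner with a norm-one $B$, then check equality. Your computation $\norm{ABC-CBA}_F^2=8=2\cdot1\cdot2\cdot2$ is right; the differences are cosmetic, namely your pair $(E_{12}-E_{21},\,E_{11}-E_{22})$ in place of the paper's $(\mathrm{diag}(1,-1),\,E_{12})$, and your single rank-two partial isometry $B=E_{11}+E_{22}$, which covers all $m,n\ge2$ at once and avoids the paper's split into the cases $n\ge m$ and $n<m$.
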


\begin{proof}
It is enough to give, for each fixed $(m,n)$, an example for which the constant $2$ is attained.
Let
$$
A_2=
\begin{bmatrix}
1&0\\
0&-1
\end{bmatrix},
~
C_2=
\begin{bmatrix}
0&1\\
0&0
\end{bmatrix}.
$$
A direct calculation gives
$$
A_2C_2-C_2A_2
=
\begin{bmatrix}
0&2\\
0&0
\end{bmatrix},
$$
so
\begin{equation}
\norm{A_2C_2-C_2A_2}_F^2=4.
\label{eq:2x2-left}
\end{equation}
Also,
$$
\norm{A_2}_{(2),2}^2=1^2+1^2=2,
~
\norm{C_2}_F^2=1.
$$

\textbf{Case (a): $n\ge m$.} Let
$$
B=
\begin{bmatrix}
I_m\\
\mathbf{0}_{(n-m)\times m}
\end{bmatrix}
\in\M_{n,m}(\C).
$$
Clearly,
$$
B^*B=I_m,
~
\norm{B}_2=1.
$$
Construct the $m\times m$ matrices
$$
\widehat A=\begin{bmatrix}
A_2&\mathbf{0}\\
\mathbf{0}&\mathbf{0}_{m-2}
\end{bmatrix}
~
\widehat C=\begin{bmatrix}
C_2&\mathbf{0}\\
\mathbf{0}&\mathbf{0}_{m-2}
\end{bmatrix},
$$
and let
$$
A=
\begin{bmatrix}
\widehat A&\mathbf{0}_{m\times(n-m)}
\end{bmatrix},
~
C=
\begin{bmatrix}
\widehat C&\mathbf{0}_{m\times(n-m)}
\end{bmatrix}.
$$
Then
\begin{align*}
ABC-CBA
&=
\begin{bmatrix}
\widehat A\widehat C-\widehat C\widehat A
&\mathbf{0}
\end{bmatrix}.
\end{align*}
Its only possibly nonzero upper-left $2\times2$ block is $A_2C_2-C_2A_2$. Hence, by \eqref{eq:2x2-left},
$$
\norm{ABC-CBA}_F^2=4.
$$
On the other hand, adding zero columns to the right does not change the nonzero singular values. Therefore,
$$
\norm{A}_{(2),2}^2=2,
~
\norm{C}_F^2=1.
$$
Thus, equality holds.

\textbf{Case (b): $n< m$.} Let
$$
B=
\begin{bmatrix}
I_n&\mathbf{0}_{n\times(m-n)}
\end{bmatrix}
\in\M_{n,m}(\C).
$$
Then
$$
BB^*=I_n,
~
\norm{B}_2=1.
$$
Construct
$$
\widetilde A=\begin{bmatrix}
A_2&\mathbf{0}\\
\mathbf{0}&\mathbf{0}_{n-2}
\end{bmatrix},
~
\widetilde C=\begin{bmatrix}
C_2&\mathbf{0}\\
\mathbf{0}&\mathbf{0}_{n-2}
\end{bmatrix},
$$
and let
$$
A=
\begin{bmatrix}
\widetilde A\\
\mathbf{0}_{(m-n)\times n}
\end{bmatrix},
~
C=
\begin{bmatrix}
\widetilde C\\
\mathbf{0}_{(m-n)\times n}
\end{bmatrix}.
$$
Then
$$
ABC-CBA
=
\begin{bmatrix}
\widetilde A\widetilde C-\widetilde C\widetilde A\\
\mathbf{0}
\end{bmatrix}.
$$
As in the previous case, we obtain
$$
\norm{ABC-CBA}_F^2=4,
~
\norm{A}_{(2),2}^2=2,
~
\norm{C}_F^2=1,
~
\norm{B}_2^2=1.
$$
Thus, equality holds.
\end{proof}

\section{Necessary and Sufficient Conditions for Equality}\label{Equality}
We now give an equality criterion for \eqref{eq:conj}.

\begin{theorem}
Let $D=\frac{B}{\norm{B}_2}\in M_{n,m}(\C)$ and $r=\min\{m,n\}$.
If a singular value decomposition $\sum_{i=1}^r\sigma_i(D)R_i$ of $D$ is fixed, then equality holds in \eqref{eq:conj} if and only if the following two conditions hold:

(a) For every index $i$ such that $\sigma_i(D)<1$,
\begin{equation*}
AR_iC=CR_iA.
\end{equation*}

(b) Let $Q_{\boldsymbol{1}}=\sum_{i=1}^rR_i$
be constructed from the all-one vector $\boldsymbol{1}$ as in Lemma~\ref{lem:convex-decomp}.
If $n\ge m$, then
\begin{equation*}
\norm{(Q_{\boldsymbol{1}}A)(Q_{\boldsymbol{1}}C)-(Q_{\boldsymbol{1}}C)(Q_{\boldsymbol{1}}A)}_F^2
=
2\norm{A}_{(2),2}^2\norm{C}_F^2.
\end{equation*}
If $m> n$, then
\begin{equation*}
\norm{(AQ_{\boldsymbol{1}})(CQ_{\boldsymbol{1}})-(CQ_{\boldsymbol{1}})(AQ_{\boldsymbol{1}})}_F^2
=
2\norm{A}_{(2),2}^2\norm{C}_F^2.
\end{equation*}
\end{theorem}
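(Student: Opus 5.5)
By Lemma~\ref{le:eqconj31}, equality in \eqref{eq:conj} is equivalent to equality in \eqref{eqconj31}. I will treat the case $n\ge m$ in detail; the case $m>n$ is the same argument with right multiplication by $Q_\varepsilon$, using \eqref{eq:semiunitary-wide}. Write $K=2\norm{A}_{(2),2}^2\norm{C}_F^2$ and use the decomposition $D=\sum_\varepsilon p_\varepsilon Q_\varepsilon$ from Lemma~\ref{lem:convex-decomp}. The proof of Theorem~\ref{thm:conj31} is the chain
\[
\norm{\Phi(D)}_F^2\le\sum_\varepsilon p_\varepsilon\norm{\Phi(Q_\varepsilon)}_F^2\le K .
\]
So equality holds if and only if both steps are equalities. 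The first step is an equality exactly when Jensen's inequality is tight. The second step, since $\sum_\varepsilon p_\varepsilon=1$, is an equality exactly when $\norm{\Phi(Q_\varepsilon)}_F^2=K$ for every $\varepsilon$ with $p_\varepsilon>0$.

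For necessity, first apply the equality case of Lemma~\ref{lem:jensen}: $\Phi(Q_\varepsilon)=\Phi(Q_{\varepsilon'})$ whenever $p_\varepsilon,p_{\varepsilon'}>0$. Now take an index $j$ with $\sigma_j(D)<1$. By Remark~\ref{lem:local-sign-coefficients}, $p_{\varepsilon^+}>0$ and $p_{\varepsilon^-}>0$. Linearity of $\Phi$ together with \eqref{eq:single-sign-flip} then gives
\[
0=\Phi(Q_{\varepsilon^+})-\Phi(Q_{\varepsilon^-})=2\Phi(R_j),
\]
which is (a). Next, $p_{\boldsymbol 1}>0$, so the second step forces $\norm{\Phi(Q_{\boldsymbol 1})}_F^2=K$. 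Since $Q_{\boldsymbol 1}$ is an isometry,
\[
\norm{\Phi(Q_{\boldsymbol 1})}_F=\norm{Q_{\boldsymbol 1}\Phi(Q_{\boldsymbol 1})}_F=\norm{(Q_{\boldsymbol 1}A)(Q_{\boldsymbol 1}C)-(Q_{\boldsymbol 1}C)(Q_{\boldsymbol 1}A)}_F,
\]
which is (b).

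For sufficiency, assume (a) and (b). Consider any $\varepsilon$ with $p_\varepsilon>0$. By Remark~\ref{lem:local-sign-coefficients}, $\varepsilon_i=-1$ is possible only when $\sigma_i(D)<1$. Hence $Q_\varepsilon-Q_{\boldsymbol 1}=\sum_{i:\varepsilon_i=-1}(-2)R_i$, and (a) gives $\Phi(Q_\varepsilon)=\Phi(Q_{\boldsymbol 1})$. I would avoid re-deriving the two equality cases and conclude directly. Since
\[
\Phi(D)=\sum_\varepsilon p_\varepsilon\Phi(Q_\varepsilon)=\Phi(Q_{\boldsymbol 1}),
\]
we get $\norm{\Phi(D)}_F^2=\norm{\Phi(Q_{\boldsymbol 1})}_F^2$, and by the isometry identity above and (b) this equals $K$. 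That is equality in \eqref{eqconj31}.

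Nothing here is genuinely hard. The one step needing care is showing that every $\varepsilon$ with $p_\varepsilon>0$ differs from $\boldsymbol 1$ only in coordinates where $\sigma_i(D)<1$; this is exactly what lets (a) collapse all the $\Phi(Q_\varepsilon)$ with positive weight to $\Phi(Q_{\boldsymbol 1})$. The other point to handle is the side-dependent passage between $\Phi(Q_{\boldsymbol 1})$ and the commutator form in (b). When $m>n$ one uses $\Phi(Q_{\boldsymbol 1})Q_{\boldsymbol 1}=(AQ_{\boldsymbol 1})(CQ_{\boldsymbol 1})-(CQ_{\boldsymbol 1})(AQ_{\boldsymbol 1})$ together with $\norm{XQ_{\boldsymbol 1}}_F=\norm{X}_F$ from \eqref{eq:semiunitary-wide}.
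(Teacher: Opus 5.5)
Your proposal is correct and follows the paper's proof essentially step for step in the necessity direction: equality is forced in both steps of the chain, the single-sign-flip pair $\varepsilon^{\pm}$ gives (a), and $p_{\boldsymbol 1}>0$ together with the isometry/coisometry identity gives (b). For sufficiency the paper is slightly more direct: it writes $\Phi(D)-\Phi(Q_{\boldsymbol 1})=\sum_{i=1}^r(\sigma_i(D)-1)\Phi(R_i)=0$ straight from the SVD, whereas you first collapse every positive-weight $\Phi(Q_\varepsilon)$ to $\Phi(Q_{\boldsymbol 1})$ and then average, which is equally valid but takes one extra step.
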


\begin{proof}
If $n\geq m$, then
$Q_\varepsilon^*Q_\varepsilon=I_m$, while if $m>n$, then
$Q_\varepsilon Q_\varepsilon^*=I_n$. Therefore,
Lemma~\ref{lem:semiunitary-endpoint} gives \eqref{eq:each-endpoint-bound} in both cases. 
By Lemma \ref{le:eqconj31}, equality in \eqref{eq:conj} is equivalent to
\begin{equation}
\norm{\Phi(D)}_F^2=2\norm{A}_{(2),2}^2\norm{C}_F^2.
\label{eq:normalized-equality}
\end{equation}
By Lemma~\ref{lem:convex-decomp},
\begin{equation*}
D
=
\sum_{\varepsilon\in\Omega_r}
p_\varepsilon Q_\varepsilon,
~
p_\varepsilon\ge0,
~
\sum_{\varepsilon\in\Omega_r}p_\varepsilon=1.
\end{equation*}
By the proof of Theorem \ref{thm:conj31},
\begin{equation}\label{eq:equality-chain}
\begin{aligned}
\norm{\Phi(D)}_F^2
&\le
\sum_{\varepsilon\in\Omega_r}
p_\varepsilon
\norm{\Phi(Q_\varepsilon)}_F^2\\
&\le
2\norm{A}_{(2),2}^2\norm{C}_F^2.
\end{aligned}
\end{equation}
We now prove necessity and sufficiency separately.

Assume that equality holds in \eqref{eq:conj}. By \eqref{eq:normalized-equality} and \eqref{eq:equality-chain},
\begin{equation}
\sum_{\varepsilon\in\Omega_r}
p_\varepsilon\norm{\Phi(Q_\varepsilon)}_F^2
=2\norm{A}_{(2),2}^2\norm{C}_F^2.
\label{eq:average-endpoint-equality}
\end{equation}
By \eqref{eq:each-endpoint-bound}, for every $\varepsilon\in\Omega_r$,
$$
2\norm{A}_{(2),2}^2\norm{C}_F^2-\norm{\Phi(Q_\varepsilon)}_F^2\ge0.
$$
Together with \eqref{eq:average-endpoint-equality}, this gives
\begin{align*}
0
&=
2\norm{A}_{(2),2}^2\norm{C}_F^2-
\sum_{\varepsilon\in\Omega_r}
p_\varepsilon\norm{\Phi(Q_\varepsilon)}_F^2\\
&=
\sum_{\varepsilon\in\Omega_r}
p_\varepsilon
\left(
2\norm{A}_{(2),2}^2\norm{C}_F^2-
\norm{\Phi(Q_\varepsilon)}_F^2
\right).
\end{align*}
The right-hand side is a finite sum of nonnegative numbers. Therefore, whenever $p_\varepsilon>0$,
\begin{equation}
\norm{\Phi(Q_\varepsilon)}_F^2=2\norm{A}_{(2),2}^2\norm{C}_F^2.
\label{eq:positive-endpoint-max}
\end{equation}
Also, the first inequality in \eqref{eq:equality-chain} must be an equality. By the equality condition in Lemma~\ref{lem:jensen},
\begin{equation}
\Phi(Q_\varepsilon)=\Phi(Q_\delta)
~
\text{whenever }
p_\varepsilon>0
\text{ and }
p_\delta>0.
\label{eq:all-images-equal}
\end{equation}
Now fix an index $i$ such that $\sigma_i(D)<1$. By Lemma~\ref{lem:convex-decomp}, there are two sign vectors
$
\varepsilon^+,\varepsilon^-\in\Omega_r
$
such that
$
p_{\varepsilon^+}>0,
~
p_{\varepsilon^-}>0,
$
and
\begin{equation}
Q_{\varepsilon^+}-Q_{\varepsilon^-}=2R_i.
\label{eq:equality-single-flip}
\end{equation}
By \eqref{eq:all-images-equal},
$$
\Phi(Q_{\varepsilon^+})
=
\Phi(Q_{\varepsilon^-}).
$$
Using the linearity of $\Phi$ and \eqref{eq:equality-single-flip},
\begin{align*}
0
&=
\Phi(Q_{\varepsilon^+})
-\Phi(Q_{\varepsilon^-})\\
&=
\Phi(Q_{\varepsilon^+}-Q_{\varepsilon^-})\\
&=
\Phi(2R_i).
\end{align*}
This proves condition (a).
We next prove condition (b).
By Lemma~\ref{lem:convex-decomp},
$
p_{\boldsymbol{1}}>0.
$
Hence, by \eqref{eq:positive-endpoint-max},
\begin{equation}
\norm{\Phi(Q_{\boldsymbol{1}})}_F^2=2\norm{A}_{(2),2}^2\norm{C}_F^2.
\label{eq:Q0-Phi-max}
\end{equation}
Thus, condition (b) holds, by \eqref{eq:semiunitary-tall} and \eqref{eq:semiunitary-wide}.

Now assume that conditions (a) and (b) both hold. First, \eqref{eq:Q0-Phi-max} holds.
Also,
\begin{align*}
\Phi(D)-\Phi(Q_{\boldsymbol{1}})
&=
\Phi\left(
\sum_{i=1}^r \sigma_i(D)R_i
-
\sum_{i=1}^rR_i
\right)\\
&=
\sum_{i=1}^r\bigl(\sigma_i(D)-1\bigr)\Phi(R_i).
\end{align*}
If $\sigma_i(D)=1$, then
$
\bigl(\sigma_i(D)-1\bigr)\Phi(R_i)=0.
$
If $\sigma_i(D)<1$, then by condition (a),
$$
\Phi(R_i)=AR_iC-CR_iA=0,
$$
so again
$
\bigl(\sigma_i(D)-1\bigr)\Phi(R_i)=0.
$
Therefore,
$
\Phi(D)=\Phi(Q_{\boldsymbol{1}}).
$
Hence, \eqref{eq:normalized-equality} holds, which proves sufficiency.
\end{proof}
\begin{remark}
Condition (b) is precisely the equality condition for B\"ottcher--Wenzel inequality \eqref{eq:BW}, as characterized in \cite{ChengFongLei2013}.
In particular, when all singular values of $D$ are $1$, condition (a) is absent. 
In this case, the equality condition reduces to that of the B\"ottcher--Wenzel inequality \cite{ChengFongLei2013}.
\end{remark}

~\\

\end{document}